\DeclareMathOperator{\Var}{Var}
\DeclareMathOperator{\Cov}{Cov}
\newcommand{\R}{\mathbf{R}}
\newcommand{\cA}{A}
\newcommand{\<}{\langle}
\renewcommand{\>}{\rangle}
\newcommand{\be}{\begin{equation}}
\newcommand{\ee}{\end{equation}}
\newcommand{\lip}{\text{\rm Lip}}
\renewcommand{\P}{\mathrm{P}}
\newcommand{\E}{\mathrm{E}}
\renewcommand{\d}{{\rm d}}
\newcommand{\e}{{\rm e}}
\renewcommand{\ge}{\geqslant}
\renewcommand{\le}{\leqslant}
\author{Le Chen\\University of Utah \and Davar Khoshnevisan\\University of Utah
\and Kunwoo Kim\\University of Utah}
\title{Decorrelation of total mass via energy\thanks{
	Research supported in part by a Swiss Federal Fellowship (L.C.)
	and a grant from the United States' National Science Foundation  (D.K.; DMS-1307470)}}
\date{October 22, 2014}
\newtheorem{stat}{Statement}[section]
\newtheorem{corollary}[stat]{Corollary}
\newtheorem{theorem}[stat]{Theorem}
\newtheorem{lemma}[stat]{Lemma}
\theoremstyle{definition}
\numberwithin{equation}{section}
\begin{document}
\maketitle
\begin{abstract}
	The main result of this small note is a quantified version of
	the assertion that if $u$ and $v$ solve two nonlinear stochastic
	heat equations, and if the mutual energy between the initial states
	of the two stochastic PDEs is small, then the total masses of
	the two systems are nearly uncorrelated for a very long time.
	One of the consequences of this fact is that a stochastic heat equation
	with regular coefficients is a finite system if and only if the
	initial state is integrable.\\

	\noindent{\it Keywords.} The stochastic heat equation;
	finite particle systems; total mass; mutual energy.\\

	\noindent{\it \noindent AMS 2010 subject classification.}
	Primary 60H15, 60H25; Secondary 35R60, 60K37, 60J30, 60B15.
\end{abstract}

\section{Introduction}

Consider a stochastic partial differential equation of the type
\begin{equation}\label{SHE}
	\frac{\partial}{\partial t} u_t(x) = \frac{\theta}{2}\frac{\partial^2}{\partial x^2}
	u_t(x) + \sigma(u_t(x))\xi(t\,,x),
\end{equation}
for all $t>0$ and $x\in\R$, where $\xi$ denotes space-time white noise;
that is, $\xi$ is a generalized centered Gaussian random field
with covariance measure
\begin{equation}
	\Cov[\xi(t\,,x)\,,\xi(s\,,y)]=\delta_0(s-t)\delta_0(x-y),
\end{equation}
for every $s,t\ge0$ and $x,y\in\R$.

Throughout, the diffusion coeffficient $\theta/2$ is assumed to be fixed,
finite, and strictly positive. In order to keep the discussion as non-technical
as possible we consider only the commonly-studied
setting in which the initial function $u_0$ is nonrandom and essentially
bounded, and the nonlinearity $\sigma:\R\to\R$ is nonrandom and
globally Lipschitz continuous. The theory of Walsh \cite{Walsh} ensures the existence
of a unique continuous solution $u$ to \eqref{SHE}.

In addition, we assume throughout that $\sigma$
satisfies the following conditions:
\begin{equation}\label{sigma}
	\sigma(x)\ge 0
	\quad\text{for all $x\in\R$ and}\quad \sigma(0)=0.
\end{equation}
Since $\sigma$ vanishes at the origin and $u_0(x)\ge0$ for all $x\in\R$,
it follows from a comparison theorem \cite{Mueller1,Mueller2} that
with probability one,
\begin{equation}\label{pos}
	u_t(x)\ge0\qquad
	\text{for all $t\ge0$ and $x\in\R$.}
\end{equation}
Because of this fact, and since $-\xi$ is also
a space-time white noise, the positivity condition on $\sigma$ is harmless.
Moreover, the positivity of $u$ suggests that we can think of $u_t(x)$
as the density  at location $x$ of a continuous particle system at time $t$.
The \emph{total mass} process of that particle system at
time $t\ge0$ is therefore
\begin{equation}\label{L1}
	\int_{-\infty}^\infty u_t(x)\,\d x = \|u_t\|_{L^1(\R)}.
\end{equation}
As a consequence,  $t\mapsto\|u_t\|_{L^1(\R)}$ is
a continuous local martingale as long as it is finite at all times.
This fact is basically due to Spitzer
\cite[Proposition 2.3]{Spitzer1981}---see also the proof
of Lemma \ref{lem:sufficient} below---and plays an important role
for example in Liggett's
analysis of linear particle systems \cite[Chapter IX]{Liggett}.

Motivated by the preceding, one says \cite[p.\ 432]{Liggett}
that the system is  \emph{finite}
if $\|u_t\|_{L^1(\R)}<\infty$
a.s.\ for all $t\ge0$ and that  it is \emph{infinite} if $\|u_t\|_{L^1(\R)}=\infty$
a.s.\ for all $t\ge0$.  Because of the way in which we have defined things,
it is logically possible that a particle system is neither finite nor infinite.
The following shows that this sort of anomaly cannot
occur in the context of \eqref{SHE}. Moreover,  that
there is a very simple characterization of when \eqref{SHE}
is a finite system.

\begin{corollary}\label{co:PAM}
	Let $u$ solve \eqref{SHE}, starting with a nonrandom
	and nonnegative initial function $u_0\in L^\infty(\R)$. Then,
	$u_0\in L^1(\R)$  if and only if the system \eqref{SHE} is finite.
\end{corollary}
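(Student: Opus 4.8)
The plan is to prove the two implications separately; the first is quick and the second is where the work lies.

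For \emph{``$u_0\in L^1(\R)$ implies the system is finite''}---which is essentially Lemma~\ref{lem:sufficient}---I would use the mild form $u_t(x)=(p_t*u_0)(x)+\int_{(0,t)\times\R}p_{t-s}(x-y)\sigma(u_s(y))\,\xi(\d s\,\d y)$, with $p$ the Gaussian semigroup kernel of $\tfrac{\theta}{2}\partial_x^2$. The standard moment bound $\sup_{0\le s\le t,\,y\in\R}\E[u_s(y)^2]<\infty$ (valid because $u_0\in L^\infty(\R)$ and $\sigma$ is Lipschitz with $\sigma(0)=0$) makes the Walsh integral a genuine martingale, so $\E[u_t(x)]=(p_t*u_0)(x)$; since $u\ge0$, Tonelli gives $\E\|u_t\|_{L^1(\R)}=\int_\R(p_t*u_0)(x)\,\d x=\|u_0\|_{L^1(\R)}<\infty$, hence $\|u_t\|_{L^1(\R)}<\infty$ a.s.\ for each fixed $t$, and Lemma~\ref{lem:sufficient} upgrades this to finiteness at all $t\ge0$ simultaneously, $t\mapsto\|u_t\|_{L^1(\R)}$ being then a continuous martingale equal---after a stochastic Fubini---to $\|u_0\|_{L^1(\R)}+\int_{(0,t)\times\R}\sigma(u_s(y))\,\xi(\d s\,\d y)$.

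For the converse I would argue the contrapositive: if $u_0\notin L^1(\R)$ then $\|u_t\|_{L^1(\R)}=\infty$ a.s.\ for every $t>0$, so the system is not finite. Fix $t>0$; for $N\ge1$ let $w^{(N)}$ solve \eqref{SHE} from $u_0^{(N)}:=u_0\1_{[-N,N]}\in L^1(\R)\cap L^\infty(\R)$, driven by the same noise. Since $0\le u_0^{(N)}\le u_0$, the comparison theorem \cite{Mueller1,Mueller2} gives $w^{(N)}_t(x)\le u_t(x)$ for all $x$, a.s., so $Z_N:=\|w^{(N)}_t\|_{L^1(\R)}\le\|u_t\|_{L^1(\R)}$. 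By the first part applied to $w^{(N)}$, $\E[Z_N]=m_N:=\|u_0^{(N)}\|_{L^1(\R)}\uparrow\|u_0\|_{L^1(\R)}=\infty$, and $s\mapsto\|w^{(N)}_s\|_{L^1(\R)}$ is a continuous martingale with quadratic variation $\int_0^t\!\int_\R\sigma(w^{(N)}_s(y))^2\,\d y\,\d s$, whence $\Var[Z_N]=\E\!\int_0^t\!\int_\R\sigma(w^{(N)}_s(y))^2\,\d y\,\d s\le\lip(\sigma)^2\!\int_0^t\!\int_\R\E[w^{(N)}_s(y)^2]\,\d y\,\d s$. Then I would bound the spatial $L^1$-norm of the second moment uniformly in $N$: the deterministic part contributes $\int_\R(p_s*u_0^{(N)})(y)^2\,\d y\le\|p_s*u_0^{(N)}\|_{L^\infty}\|p_s*u_0^{(N)}\|_{L^1}\le\|u_0\|_{L^\infty}m_N$, the Walsh part has its $y$-integral controlled by $\mathrm{const}(\theta)\int_0^s(s-r)^{-1/2}\bigl(\int_\R\E[\sigma(w^{(N)}_r(z))^2]\,\d z\bigr)\,\d r$, and a fractional Gronwall inequality closes this to $\int_\R\E[w^{(N)}_s(y)^2]\,\d y\le C\,m_N$ for $0\le s\le t$, with $C=C(t,\theta,\lip(\sigma),\|u_0\|_{L^\infty})$ \emph{independent of $N$}. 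Hence $\Var[Z_N]\le C\,m_N$, so by Chebyshev $\P(Z_N\le m_N/2)\le 4C/m_N\to0$; as $\{Z_N>m_N/2\}\subseteq\{\|u_t\|_{L^1(\R)}>m_N/2\}$ and $m_N\to\infty$, for each $M$ we get $\P(\|u_t\|_{L^1(\R)}>M)\ge\sup_{N:\,m_N>2M}\P(Z_N>m_N/2)=1$, i.e.\ $\|u_t\|_{L^1(\R)}=\infty$ a.s.

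The hard part will be the $N$-uniform energy estimate $\int_\R\E[w^{(N)}_s(y)^2]\,\d y\le C\|u_0^{(N)}\|_{L^1(\R)}$---an $L^2$ mutual-energy bound of the very type behind the paper's decorrelation theorem---where the delicate point is closing the fractional Gronwall argument over all of $[0,t]$ with constants that do not depend on the truncation level. The equivalence asserted by the corollary follows from the two parts; the sharper dichotomy mentioned before the statement (the system is never ``neither finite nor infinite'') then follows as well, using the conditional identity $\E[\|u_t\|_{L^1(\R)}\mid\F_s]=\|u_s\|_{L^1(\R)}$ (valid in $[0,\infty]$ for $s\le t$, with no $L^1$ hypothesis), which shows that finiteness of the total mass at one time forces it at all later times.
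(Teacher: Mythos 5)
Your proposal is correct, and its skeleton---truncate $u_0$ to $u_{0,N}:=u_0\1_{[-N,N]}$, compare solutions, show the mean of the truncated total mass diverges while its variance is $O(\text{mean})$ uniformly in $N$, and finish with Chebyshev---is exactly the paper's. Where you genuinely diverge is in how the key $N$-uniform variance bound \eqref{VV} is obtained. The paper simply invokes Theorem \ref{th:main} with $u=v=u_{\cdot,N}$ to get $\Var(\|u_{t,N}\|_{L^1(\R)})\le A\e^{\beta t}\,\mathcal{E}_\beta(u_{0,N},u_{0,N})$ and then uses the elementary potential bound $\sup_x(\mathcal{R}_\beta u_0)(x)\le\beta^{-1}\|u_0\|_{L^\infty(\R)}$ [eq.\ \eqref{Ru_0}] to conclude $\Var\le A\e^{\beta t}\beta^{-1}\|u_0\|_{L^\infty(\R)}\|u_{0,N}\|_{L^1(\R)}$. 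You instead start from the same exact identity (the expected quadratic variation of the total-mass martingale, i.e., Lemma \ref{lem:Cov} with $u=v$) and control $\int_\R\E[u_{s,N}(y)^2]\,\d y$ by $C(t)\|u_0\|_{L^\infty(\R)}\|u_{0,N}\|_{L^1(\R)}$ through a fractional Gronwall iteration of the renewal inequality \eqref{recur} in the time domain, rather than through its Laplace transform. These are two renditions of the same estimate: your version is self-contained and more elementary (it does not need Theorem \ref{th:main}, only the $L^2$ a priori bound to start the Gronwall loop, which the truncations supply since $u_{0,N}\in L^2(\R)$), at the cost of a less explicit constant; the paper's route is shorter given that the theorem is already in hand, and it isolates the quantity that actually drives the proof---the boundedness of the $\beta$-potential $\mathcal{R}_\beta u_0$---which is precisely what the paper reuses to handle initial measures in Corollary \ref{co:PAM:bis}. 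One small caution: your closing aside about the conditional identity $\E[\|u_t\|_{L^1(\R)}\mid\F_s]=\|u_s\|_{L^1(\R)}$ holding ``in $[0,\infty]$ with no $L^1$ hypothesis'' is not justified and is not needed, since your fixed-$t$ argument already gives $\|u_t\|_{L^1(\R)}=\infty$ a.s.\ for each $t>0$, which is the definition of an infinite system used here.
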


This result is stated as a corollary of an inequality [Theorem \ref{th:main} below]
that says roughly that if $u$ and $v$ solve \eqref{SHE}---with respective nonlinearities
$\sigma_1$ and $\sigma_2$ and initial functions $u_0$ and $v_0$---and if the
``mutual energy'' between $u_0$ and $v_0$ is small, then the total mass of $u$ at
time $t$ is almost uncorrelated from the total mass of $v$ at time $t$
for a wide range of times $t$. This fact has other interesting consequences as well.
We name two of them next.

 The following result says that if $u_0$ and $v_0$ have bounded support
 and the support of $u_0$ is very far  away from the
 support of $v_0$, then the total mass of $u$ at time $t$ is almost uncorrelated
 from the total mass of $v$ at time $t$ for all large times $t$ upto a constant multiple
 of the distance between the supports of $u_0$ and $v_0$. In order to write
 this out more carefully, let $\mathcal{S}[f]$ denote the support of a function $f:\R\to\R$,
and let ``$\text{dist}$'' denote the Hausdorff distance between subsets of
the real line. Also, let $\lip(\varphi)$ to be the optimal Lipschitz constant
of a Lipschitz-continuous function $\varphi:\R\to\R$; that is,
\begin{equation}
	\lip(\varphi):= \sup_{-\infty<a<b<\infty}\left|
	\frac{\varphi(a)-\varphi(b)}{a-b}\right|.
\end{equation}
Then the preceding takes the following more precise form.

\begin{corollary}\label{co:1}
	Suppose $u$ and $v$ solve \eqref{SHE} with respective nonrandom
	nonnegative initial functions $u_0,v_0\in L^1(\R)\cap L^\infty(\R)$ that
	satisfy $\|u_0\|_{L^1(\R)}\wedge\|v_0\|_{L^1(\R)}>0$
	and  diffusion coefficients $\sigma_1,\sigma_2$
	that satisfy \eqref{sigma}. Then there exist
	$\eta\in(0\,,1)$ and $\gamma>0$---depending only on $(\lip(\sigma_1),
	\lip(\sigma_2),\theta)$---such that
	\begin{equation}
		\Cov\left( \frac{\|u_t\|_{L^1(\R)}}{\|u_0\|_{L^1(\R)}}\,,
		\frac{\|v_t\|_{L^1(\R)}}{\|v_0\|_{L^1(\R)}}\right)
		\le \eta^{-1} \e^{-\gamma t},
	\end{equation}
	provided that
	$0< t< \eta\cdot\text{\rm dist}  (\mathcal{S}[u_0]\,,\mathcal{S}[v_0] ).$
\end{corollary}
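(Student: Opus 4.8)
\noindent\emph{Proof proposal.}
Set $D:=\text{dist}(\mathcal{S}[u_0]\,,\mathcal{S}[v_0])$ and $M:=\|u_0\|_{L^1(\R)}>0$, $N:=\|v_0\|_{L^1(\R)}>0$; if $D=0$ the asserted range of $t$ is empty and there is nothing to prove, so assume $D>0$. The plan is to combine Theorem \ref{th:main} with an exponential-in-$D$ bound for the mutual energy of $u_0$ and $v_0$, and then to trade that gain against the exponential-in-$t$ loss in the theorem on the range $t<\eta D$. Concretely, Theorem \ref{th:main} provides a bound of the form $\Cov(\|u_t\|_{L^1(\R)}\,,\|v_t\|_{L^1(\R)})\le C_0\,\e^{\beta t}\,\mathcal{E}_\beta(u_0\,,v_0)$, where $\beta>0$ and $C_0$ depend only on $(\lip(\sigma_1)\,,\lip(\sigma_2)\,,\theta)$ and the mutual energy is (up to a fixed constant) $\mathcal{E}_\beta(f\,,g)=\int_0^\infty\e^{-\beta s}\,\langle P_sf\,,P_sg\rangle_{L^2(\R)}\,\d s$, with $(P_s)_{s\ge0}$ the heat semigroup generated by $\tfrac{\theta}{2}\partial_x^2$. (Should the exact statement differ slightly --- in the growth exponent, in the multiplicative constant, or in the precise weight defining $\mathcal{E}_\beta$ --- the argument below adapts verbatim.) After dividing by $MN$, it therefore suffices to prove an energy estimate $\mathcal{E}_\beta(u_0\,,v_0)\le C_1\,MN\,\e^{-c_1D}$ with $C_1,c_1>0$ depending only on $(\beta\,,\theta)$, and then to choose $\eta$ and $\gamma$.

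\emph{Energy estimate.} I would use the Chapman--Kolmogorov identity $\langle P_sf\,,P_sg\rangle_{L^2(\R)}=\int_\R\int_\R p_{2s}(a-b)\,f(a)g(b)\,\d a\,\d b$ (valid since $p_s$ is symmetric), where $p_r(z)=(2\pi\theta r)^{-1/2}\exp(-z^2/(2\theta r))$. With $f=u_0$ and $g=v_0$ the integrand is supported on $a\in\mathcal{S}[u_0]$, $b\in\mathcal{S}[v_0]$, where $|a-b|\ge D$; since $z\mapsto p_{2s}(z)$ is decreasing in $|z|$ this gives $p_{2s}(a-b)\le p_{2s}(D)$ there, hence $\langle P_su_0\,,P_sv_0\rangle_{L^2(\R)}\le MN\,p_{2s}(D)$. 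Integrating against $\e^{-\beta s}\,\d s$ and invoking the classical identity $\int_0^\infty s^{-1/2}\e^{-\beta s-a/s}\,\d s=\sqrt{\pi/\beta}\,\e^{-2\sqrt{a\beta}}$ with $a=D^2/(4\theta)$ then yields
\begin{equation*}
	\mathcal{E}_\beta(u_0\,,v_0)\le C_1\,MN\,\e^{-c_1 D},\qquad c_1:=\sqrt{\beta/\theta},
\end{equation*}
for a constant $C_1$ depending only on $(\beta\,,\theta)$ (the precise value of $c_1$ being immaterial).

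\emph{Choosing the constants.} Plugging this into Theorem \ref{th:main} and dividing by $MN$ --- at which point all dependence on the initial data disappears --- produces
\begin{equation*}
	\Cov\!\left(\frac{\|u_t\|_{L^1(\R)}}{M}\,,\frac{\|v_t\|_{L^1(\R)}}{N}\right)\le C\,\e^{\beta t-c_1D}
	\qquad\text{for all }t\ge0,
\end{equation*}
with $C:=C_0C_1$ depending only on $(\lip(\sigma_1)\,,\lip(\sigma_2)\,,\theta)$. On the range $0<t<\eta D$ one has $D>t/\eta$, so $\beta t-c_1D<(\beta-c_1/\eta)\,t$. I would then set $\gamma:=\beta$ and take $\eta:=\min\{c_1/(2\beta)\,,1/C\,,1/2\}\in(0\,,1)$: the first term guarantees $c_1/\eta\ge2\beta$, hence $\beta-c_1/\eta\le-\beta=-\gamma$; the second guarantees $\eta^{-1}\ge C$; shrinking $\eta$ is harmless here because it only narrows the range $0<t<\eta D$ and only makes $\beta-c_1/\eta$ more negative. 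With these choices --- which depend only on $\beta$, hence only on $(\lip(\sigma_1)\,,\lip(\sigma_2)\,,\theta)$ --- the displayed bound becomes, on $0<t<\eta D$,
\begin{equation*}
	\Cov\!\left(\frac{\|u_t\|_{L^1(\R)}}{M}\,,\frac{\|v_t\|_{L^1(\R)}}{N}\right)\le C\,\e^{-\gamma t}\le\eta^{-1}\e^{-\gamma t},
\end{equation*}
which is the assertion.

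\emph{The main obstacle.} There is no genuinely hard estimate here; the substance is (a) reading off from Theorem \ref{th:main} that the only way the initial data enter the bound is through the factor $\|u_0\|_{L^1(\R)}\|v_0\|_{L^1(\R)}$ --- which is exactly cancelled by the normalization in the statement --- paired with an exponentially weighted heat energy, and (b) the Gaussian computation turning a separation of supports of size $D$ into true exponential decay $\e^{-c_1D}$ at a rate $c_1$ depending only on $(\beta\,,\theta)$. The remaining trade-off $\beta t-c_1D<-\gamma t$ valid on $t<\eta D$ is elementary. The hypothesis $M\wedge N>0$ is used only so that the normalization in the statement makes sense.
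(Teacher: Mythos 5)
Your proposal is correct and takes essentially the same route as the paper: bound the mutual energy by $\mathcal{E}_\beta(u_0\,,v_0)\le \tfrac{1}{2\sqrt{\beta\theta}}\,\|u_0\|_{L^1(\R)}\|v_0\|_{L^1(\R)}\,\e^{-D\sqrt{\beta/\theta}}$ using the separation of the supports, feed this into Theorem \ref{th:main}, and trade the exponential gain in $D$ against the loss $\e^{\beta t}$ on the range $t<\eta D$. The only (minor) divergence is in the last step: the paper chooses $\beta:=\delta(\Delta/t)^2$ depending on $t$ and $\Delta$, whereas you fix $\beta$ once just above the threshold $[\lip(\sigma_1)\lip(\sigma_2)]^2/(4\theta)$ and use $D>t/\eta$ directly --- both close the argument, and yours is, if anything, a little cleaner.
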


In order to motivate our third corollary, we first digress a little.
The Heisenberg uncertainty principle says, in one form or another,
that a nice function $\varphi$ and its Fourier transform
$\widehat\varphi$ cannot
both have small support; see Donoho and Stark \cite{DonohoStark}
for example.
Therefore one expects that, for many nice pairs  of
functions $u_0$ and $v_0$, $\text{\rm dist}(\mathcal{S}[u_0]\,,\mathcal{S}[v_0])$ ought to
be large if and only
if the Lebesgue measure of $\mathcal{S}[\widehat{u}_0]
\cap \mathcal{S}[\widehat{v}_0]$ is small. If this were true, then because of
Corollary \ref{co:1}, one would also expect that
if $\text{\rm meas}(\mathcal{S}[\widehat{u}_0]
\cap \mathcal{S}[\widehat{v}_0])$ were small, then $\|u_t\|_{L^1(\R)}$
and $\|v_t\|_{L^1(\R)}$  have small covariance for a long time.
The following shows that this last assertion is so, and also yields
a quantitative estimate of the ``how long.''

\begin{corollary}\label{co:2}
	Suppose $u$ and $v$ solve \eqref{SHE} with respective nonrandom
	nonnegative initial functions $u_0,v_0\in L^1(\R)\cap L^\infty(\R)$
	and diffusion coefficients $\sigma_1$ and $\sigma_2$ respectively.
	Then there exist
	$\eta\in(0\,,1)$ and $\gamma>0$---depending only on $\lip(\sigma_1)$,
	$\lip(\sigma_2)$, and $\theta$---such that
	\begin{equation}
		\Cov\left( \frac{\|u_t\|_{L^1(\R)}}{\|u_0\|_{L^1(\R)}}\,,
		\frac{\|v_t\|_{L^1(\R)}}{\|v_0\|_{L^1(\R)}}\right)
		\le \eta^{-1}\e^{-\gamma t},
	\end{equation}
	provided that
	$0< t < \eta\cdot\log_+ [1/\text{\rm meas} (
	\mathcal{S}[\widehat{u}_0]
	 \cap \mathcal{S}[\widehat{v}_0] ) ].$
\end{corollary}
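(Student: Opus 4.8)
The plan is to derive Corollary~\ref{co:2} from Theorem~\ref{th:main}, as with Corollary~\ref{co:1}, the extra ingredient being a deterministic bound on the mutual energy of $u_0$ and $v_0$. Since the conclusion of Theorem~\ref{th:main} has the shape $\Cov(\cdots)\le\eta^{-1}\e^{-\gamma t}$ on a range of times that widens as the normalized mutual energy $\mathcal E(u_0,v_0)/(\|u_0\|_{L^1(\R)}\|v_0\|_{L^1(\R)})$ shrinks, the whole matter reduces to the estimate
\[
	\mathcal E(u_0,v_0)\ \le\ \kappa\,\varepsilon\,\|u_0\|_{L^1(\R)}\,\|v_0\|_{L^1(\R)},\qquad
	\varepsilon:=\mathrm{meas}\bigl(\mathcal S[\widehat{u}_0]\cap\mathcal S[\widehat{v}_0]\bigr),
\]
with $\kappa$ depending only on $\theta$. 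Here one may assume without loss that $\|u_0\|_{L^1(\R)}\wedge\|v_0\|_{L^1(\R)}>0$ (nothing to normalize otherwise) and $\varepsilon<1$ (otherwise $\log_+[1/\varepsilon]=0$ and the claimed range of $t$ is empty); granting the estimate, one feeds it into Theorem~\ref{th:main} and recovers the range $0<t<\eta\log_+[1/\varepsilon]$ after absorbing $\kappa$ into a smaller $\eta$, which is allowed since $\eta$ may depend on $\lip(\sigma_1),\lip(\sigma_2),\theta$.

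The heart of the matter is the frequency-side computation foreshadowed in the discussion preceding the statement. First I would invoke Plancherel's theorem --- legitimate since $u_0,v_0\in L^1(\R)\cap L^\infty(\R)\subseteq L^2(\R)$ --- to write $\mathcal E(u_0,v_0)$ as $(2\pi)^{-1}$ times an integral over $\R$ of $\widehat{u}_0(\zeta)\,\overline{\widehat{v}_0(\zeta)}$ against the nonnegative weight supplied by the heat semigroup (a factor $\e^{-\theta s\zeta^2}$ at the relevant time $s$), which is bounded by $1$. Next I would observe that, since $\widehat{u}_0$ is continuous and vanishes off $\mathcal S[\widehat{u}_0]$, and $\widehat{v}_0$ off $\mathcal S[\widehat{v}_0]$, the integrand is supported in the closed --- hence measurable --- set $\mathcal S[\widehat{u}_0]\cap\mathcal S[\widehat{v}_0]$. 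Finally I would use $|\widehat{u}_0(\zeta)|\le\|u_0\|_{L^1(\R)}$ and $|\widehat{v}_0(\zeta)|\le\|v_0\|_{L^1(\R)}$ for every $\zeta\in\R$, which hold because $u_0,v_0\in L^1(\R)$. The three facts combine to give the displayed estimate with $\kappa=1/(2\pi)$.

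What remains is bookkeeping of constants, where I expect no essential obstacle. Theorem~\ref{th:main} will yield a bound of the form $C_1\,t^{a}\,\e^{C_2 t}$ times the normalized mutual energy, for some $a\ge0$ and constants $C_1,C_2$ depending only on $\lip(\sigma_1),\lip(\sigma_2),\theta$; the estimate above turns this into at most $(C_1/2\pi)\,t^{a}\,\e^{C_2 t}\varepsilon$, and using $\varepsilon<\e^{-t/\eta}$ on the claimed range of $t$ converts the exponent into $C_2-1/\eta$. Picking $\eta$ small --- in terms of $C_1,C_2,a$ only --- makes $C_2-1/\eta<0$ with slack enough to swallow the polynomial prefactor $t^{a}$ and to bring the leading constant below $\eta^{-1}$; one then takes, say, $\gamma=\tfrac12(1/\eta-C_2)$. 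If Theorem~\ref{th:main} is instead already stated in the threshold form ``$\Cov\le\eta^{-1}\e^{-\gamma t}$ provided $0<t<\eta\log_+[1/\mathcal E_{\mathrm{norm}}]$,'' then nothing further is needed: for $\varepsilon<1$ one has $\log_+[1/(\kappa\varepsilon)]=\log_+[2\pi/\varepsilon]\ge\log_+[1/\varepsilon]$, so the same $\eta$ and $\gamma$ work verbatim. The single point that warrants a moment's care is matching the functional that Theorem~\ref{th:main} controls with the mutual-energy expression entering the Plancherel step, which is a matter of unwinding definitions rather than a genuine difficulty.
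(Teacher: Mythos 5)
Your proposal is correct and follows essentially the same route as the paper: Parseval's identity, a uniform bound on the Fourier multiplier of the energy form (the paper uses $\widehat{r}_\beta(z)=1/(\beta+\theta z^2)\le 1/\beta$ where you use a heat-kernel weight bounded by $1$, an immaterial difference of constants), the bound $|\widehat{u}_0|\,|\widehat{v}_0|\le\|u_0\|_{L^1(\R)}\|v_0\|_{L^1(\R)}$ restricted to $\mathcal{S}[\widehat{u}_0]\cap\mathcal{S}[\widehat{v}_0]$, and then the same exponential bookkeeping via Theorem \ref{th:main} that the paper itself delegates to the proof of Corollary \ref{co:1}. No gaps.
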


Let us conclude the Introduction with a brief comment on
the previous corollary.

If the Fourier transforms of $u_0$ and $v_0$ were supported
on disjoint sets then Corollary \ref{co:2} would imply the extremely
surprising statement that
$\|u_t\|_{L^1(\R)}$ and $\|v_t\|_{L^1(\R)}$ are uncorrelated
for all $t\ge0$. It turns out that this condition of disjoint support typically does
not hold for reasons that are similar to the reasons behind Heisenberg's
uncertainty principle. In order to see this, let $\cA(\R)$ denote the
\emph{Wiener algebra};
that is, the collection of all $f\in L^1(\R)$ such that
$\widehat{f}\in L^1(\R)$. As usual, we identify two functions that are
a.e.\ equal. Owing to the Riemann--Lebesgue lemma,
$\cA(\R)\subset C_0(\R)$ [the latter being
the space of uniformly continuous functions on $\R$ that vanish at
infinity]. In particular, $\cA(\R)$ is dense in $L^p(\R)$ for all $p\in(0\,,\infty]$;
in turn, the Schwartz space $\mathscr{S}(\R)$ of test functions of
rapid decrease is dense in $\cA(\R)$. To summarize, $\cA(\R)$ is a {\it big}
collection of real functions.
Furthermore, the Fourier transform is a one-to-one and onto map from
$\cA(\R)$ to $\cA(\R)$ thanks to the inversion theorem.

Now suppose that
$u_0,v_0\in \cA(\R)$
are nonnegative functions that
have the additional property that their Fourier transforms have disjoint support.
By the inversion theorem, $\widehat{u}_0$ and $\widehat{v}_0$ are both
positive definite and continuous. They are,
in particular, maximized at the origin [theorem of Herglotz].
This implies that at least one of $u_0$ and $v_0$ is identically zero.

It is possible to appeal to the works of Chen and Dalang
\cite{ChenDalang13Heat} and Chen and Kim \cite{ChenKim14Comp} in order to
extend our results to the case that $u_0$ is a measure. For example,
that extension of Corollary \ref{co:PAM} asserts that the system is finite 
if and only if the initial measure is finite. See  
\S\ref{rem:measure} below for more details.

\section{Some background, and an inequality}
We follow the theory of Walsh \cite{Walsh}---see in particular, 
Dalang \cite{Dalang} and Chen and Dalang \cite{ChenDalang13Heat}---%
and interpret the SPDE \eqref{SHE} as the  random integral equation,
\begin{equation}\label{mild}
	u_t(x) =(p_t*u_0)(x) + (p\circledast\sigma(u))_t(x),
\end{equation}
where: (a)
$(p\circledast \Psi)_t(x) := \int_{(0,t)\times\R} p_{t-s}(x-y)
\Psi_s(y)\,\xi(\d s\,\d y)$
is a  stochastic integral, in the sense of Walsh  \cite{Walsh}, of
any predictable random field $\Psi$ that satisfies
$\int_0^t\d s\int_{-\infty}^\infty\d y\ [p_{t-s}(y-x)]^2\E[\Psi_s(y)^2]<\infty$;
and (b) $p$ denotes the heat kernel; i.e.,
\begin{equation}
	p_r(a) := (2\pi\theta r)^{-1/2}\exp\left( - \frac{a^2}{2\theta r}\right)\qquad
	[r>0,a\in\R].
\end{equation}

According to the theory of Dalang \cite{Dalang},
\begin{equation}
	\sup_{t\in[0,T]}\sup_{x\in\R}\E\left( |u_t(x)|^k\right)<\infty
	\qquad\text{for all finite $T>0$ and $k\ge 2$}.
\end{equation}
Moreover, the three quantities that appear naturally
on both sides of
\eqref{mild} are continuous functions of
$t>0$ and $x\in\R$  [up to a modification]. Therefore, there is a single null set off which
\eqref{mild} holds for all $t>0$ and $x\in\R$. We will refer to this fact
tacitly from now on.

Let us mention an elementary consequence of Theorem 9 of Dalang and Mueller
\cite{DalangMueller}.
The following is well known to experts; we include the short proof for
the sake of completeness.

\begin{lemma}\label{lem:sufficient}
	If $u_0\in L^1(\R)\cap L^\infty(\R)$, then
	$u_t\in L^1(\R)$ a.s.\ for all $t\ge0$, and
	\begin{equation}
		\sup_{t\in[0,T]}\E\left(\|u_t\|_{L^1(\R)}^2\right)<\infty
		\qquad\text{for all finite $T\ge0$.}
	\end{equation}
	In particular, $t\mapsto\|u_t\|_{L^1(\R)}$ is a nonnegative
	continuous $L^2(\Omega)$-martingale.
\end{lemma}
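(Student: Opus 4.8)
The plan is to combine three ingredients: a second‑moment bound for $u_t$ in $L^2(\R)$ coming from the Dalang--Mueller theory, an integration of the two‑point correlation function to pass from $L^2(\R)$ to $L^1(\R)$, and a stochastic Fubini argument that identifies $t\mapsto\|u_t\|_{L^1(\R)}$ as a Walsh stochastic integral.

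First, since $u_0\in L^1(\R)\cap L^\infty(\R)$ we also have $u_0\in L^2(\R)$, with $\|u_0\|_{L^2(\R)}^2\le\|u_0\|_{L^1(\R)}\,\|u_0\|_{L^\infty(\R)}$. Theorem~9 of \cite{DalangMueller} then yields that $u_t\in L^2(\R)$ a.s.\ for every $t\ge0$ and that $M_T:=\sup_{t\in[0,T]}\E(\|u_t\|_{L^2(\R)}^2)<\infty$ for every finite $T$. (One can also see this directly: writing $g(t):=\E(\|u_t\|_{L^2(\R)}^2)$, the mild formulation \eqref{mild}, Walsh's isometry, the identity $\int_\R p_r(x-y)^2\,\d x=p_{2r}(0)$, Young's inequality $\|p_t*u_0\|_{L^2(\R)}\le\|u_0\|_{L^2(\R)}$, and the bound $|\sigma(z)|\le\lip(\sigma)|z|$ give $g(t)\le 2\|u_0\|_{L^2(\R)}^2+2\lip(\sigma)^2\int_0^t p_{2(t-s)}(0)\,g(s)\,\d s$, a fractional Gronwall inequality whose Picard iterates are bounded on every compact interval.)

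Second, I would estimate $\E(\|u_t\|_{L^1(\R)}^2)$ by integrating the two‑point function. Since $u\ge0$, Tonelli gives $\E(\|u_t\|_{L^1(\R)}^2)=\int_\R\int_\R\E(u_t(x)u_t(x'))\,\d x\,\d x'$. In \eqref{mild} the cross terms between the deterministic part $p_t*u_0$ and the stochastic integral $(p\circledast\sigma(u))_t$ vanish in expectation, because by the first step the latter is a mean‑zero $L^2$‑martingale; Walsh's covariance formula then gives $\E(u_t(x)u_t(x'))=(p_t*u_0)(x)(p_t*u_0)(x')+\int_0^t\d s\int_\R\d y\,p_{t-s}(x-y)p_{t-s}(x'-y)\,\E(\sigma(u_s(y))^2)$, and every term here is nonnegative. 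Using $\E(\sigma(u_s(y))^2)\le\lip(\sigma)^2\E(u_s(y)^2)$, integrating in $(x,x')$, and invoking $\int_\R p_{t-s}(z-y)\,\d z=1$ together with $\|p_t*u_0\|_{L^1(\R)}\le\|u_0\|_{L^1(\R)}$, one obtains $\E(\|u_t\|_{L^1(\R)}^2)\le\|u_0\|_{L^1(\R)}^2+\lip(\sigma)^2\int_0^t\E(\|u_s\|_{L^2(\R)}^2)\,\d s\le\|u_0\|_{L^1(\R)}^2+\lip(\sigma)^2\,T\,M_T$ for $t\le T$, which is the asserted bound; in particular $\|u_t\|_{L^1(\R)}<\infty$ a.s.\ for each fixed $t$. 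The uniform‑in‑$t$ statement follows because the jointly continuous version of $u$ makes $t\mapsto\|u_t\|_{L^1(\R)}$ lower semicontinuous (Fatou), so on a single full‑measure event it is dominated, for all $t$, by the continuous modification produced in the next step.

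Third, for the martingale property I would integrate \eqref{mild} in $x$. The deterministic term contributes $\int_\R(p_t*u_0)(x)\,\d x=\|u_0\|_{L^1(\R)}$ by Tonelli, while for the stochastic term a truncation argument works: stochastic Fubini is legitimate on each bounded interval $[-R\,,R]$ (the required integrability being finite there since $\sup_{s\le T,\,y}\E(\sigma(u_s(y))^2)<\infty$), giving $\int_{-R}^{R}(p\circledast\sigma(u))_t(x)\,\d x=\int_{(0,t)\times\R}\big(\int_{-R}^{R}p_{t-s}(x-y)\,\d x\big)\sigma(u_s(y))\,\xi(\d s\,\d y)$; letting $R\to\infty$, using dominated convergence on the left (legitimate since $(p\circledast\sigma(u))_t=u_t-p_t*u_0\in L^1(\R)$ a.s.) and $L^2(\Omega)$‑dominated convergence for stochastic integrals on the right, one gets $\|u_t\|_{L^1(\R)}=\|u_0\|_{L^1(\R)}+\int_{(0,t)\times\R}\sigma(u_s(y))\,\xi(\d s\,\d y)$. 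The right‑hand side is a constant plus a Walsh integral whose integrand satisfies the required square‑integrability, hence a continuous $L^2(\Omega)$‑martingale; nonnegativity is \eqref{pos}, and this identity also supplies the continuous modification invoked above. The step I expect to be most delicate is precisely this last identification: the straightforward stochastic Fubini over all of $\R$ fails because the relevant integrability is only bounded, not integrable, in the space variable, which is what forces the truncation‑and‑limit detour and, earlier, the passage through the $L^2(\R)$ bound rather than a direct $L^1(\R)$ estimate.
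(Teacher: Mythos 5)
Your proposal is correct and follows essentially the same route as the paper: integrate the mild formulation \eqref{mild} in $x$ via a stochastic Fubini argument to identify $\|u_t\|_{L^1(\R)}$ as $\|u_0\|_{L^1(\R)}$ plus the Walsh martingale $\int_{(0,t)\times\R}\sigma(u_s(y))\,\xi(\d s\,\d y)$, and arrive at the same bound $\E(\|u_t\|_{L^1(\R)}^2)\le\|u_0\|_{L^1(\R)}^2+\lip(\sigma)^2\int_0^t\E(\|u_s\|_{L^2(\R)}^2)\,\d s$. The only difference is bookkeeping: the paper imports the key integrability $\E\int_0^t\!\int[\sigma(u_s(y))]^2\,\d y\,\d s<\infty$ from Theorem 2.4 of Chen--Dalang and leaves the Fubini step to Walsh's general theory, whereas you derive it from the $L^2(\R)$ moment bound (consistent with the paper's own pointer to Theorem 9 of Dalang--Mueller) and spell out the truncation argument.
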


\begin{proof}
	On one hand, Theorem 2.4 of Chen and Dalang
	\cite{ChenDalang13Heat} implies, as part of the definition of a solution,
	that
	\begin{equation}
		\E\left(\int_0^t\d s\int_{-\infty}^\infty\d y\ [\sigma(u_s(y))]^2 \right)
		<\infty,
	\end{equation}
	for all $t>0$. On the other hand,
	the general theory of Walsh \cite{Walsh} implies that:
	\begin{enumerate}
	\item $t\mapsto \int_{(0,t)\times\R}
		\sigma(u_s(y))\,\xi(\d s\,\d y)$ defines a continuous $L^2$-martingale
		with quadratic variation $\int_0^t\d s\int_{-\infty}^\infty\d y\
		[\sigma(u_s(y))]^2$ at time $t>0$; and
	\item We have the stochastic Fubini theorem: Almost surely,
		\begin{equation}
			\int_{-\infty}^\infty (p\circledast\sigma(u))_t(x)\,\d x=
			\int_{(0,t)\times\R} \sigma(u_s(y))\,\xi(\d s\,\d y)
			\qquad\forall t>0.
		\end{equation}
	\end{enumerate}

	The preceding allows us to integrate both sides of \eqref{mild}
	$[\d x]$ in order to conclude that
	\begin{equation}
		\|u_t\|_{L^1(\R)} = \|u_0\|_{L^1(\R)} +
		\int_{(0,t)\times\R}\sigma(u_s(y))\,\xi(\d s\,\d y).
	\end{equation}
	The lemma follows. In fact, we also obtain the following
	estimate:
	\begin{equation}
		\E\left( \|u_t\|_{L^1(\R)}^2\right) \le \|u_0\|_{L^1(\R)}^2 +
		\lip^2\int_0^t \E\left( \|u_s\|_{L^2(\R)}^2\right)\,\d s,
	\end{equation}
	valid for all $t>0$.
\end{proof}

\begin{lemma}\label{lem:Cov}
	Suppose $u$ and $v$ are the solutions to \eqref{SHE} with 
	respect to diffusion coefficients $\sigma_1$ and $\sigma_2$ 
	respectively, and  nonrandom nonnegative initial functions 
	$u_0,v_0\in L^1(\R)\cap L^2(\R)$ respectively. Then,
	\begin{equation}
		\Cov\left( \|u_t\|_{L^1(\R)}\,,\|v_t\|_{L^1(\R)}\right)
		=\int_0^t\d s\int_{-\infty}^\infty\d y\
		\E\left[ \sigma_1(u_s(y))\sigma_2(v_s(y))\right],
	\end{equation}
	for all $t>0$. Thus, $\|u_t\|_{L^1(\R)}$
	and $\|v_t\|_{L^1(\R)}$ are positively correlated.
\end{lemma}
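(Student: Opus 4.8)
The plan is to mimic the argument of Lemma \ref{lem:sufficient}, but now carry it out simultaneously for both solutions $u$ and $v$. From the proof of Lemma \ref{lem:sufficient} (applied once with $(\sigma_1,u_0)$ and once with $(\sigma_2,v_0)$, noting that $L^1(\R)\cap L^2(\R)\subset L^1(\R)\cap L^\infty(\R)$ is \emph{not} automatic, so one should instead invoke the stochastic Fubini theorem directly under the $L^2$-integrability hypothesis that the definition of a solution supplies) we obtain the two representations
\begin{equation}
	\|u_t\|_{L^1(\R)} = \|u_0\|_{L^1(\R)} + M_t,\qquad
	\|v_t\|_{L^1(\R)} = \|v_0\|_{L^1(\R)} + N_t,
\end{equation}
where $M_t:=\int_{(0,t)\times\R}\sigma_1(u_s(y))\,\xi(\d s\,\d y)$ and $N_t:=\int_{(0,t)\times\R}\sigma_2(v_s(y))\,\xi(\d s\,\d y)$ are continuous, mean-zero $L^2$-martingales driven by the \emph{same} white noise $\xi$. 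Since the initial masses are nonrandom constants, $\Cov(\|u_t\|_{L^1(\R)},\|v_t\|_{L^1(\R)})=\E[M_tN_t]$.

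Next I would identify $\E[M_tN_t]$ with the predictable cross-quadratic-variation $\langle M,N\rangle_t$. The clean way to do this is via the Walsh isometry / covariation formula for Walsh stochastic integrals against the same white noise: for predictable integrands $\Phi,\Psi$ with $\E\int_0^t\!\d s\int\d y\,\Phi_s(y)^2<\infty$ and likewise for $\Psi$, one has
\begin{equation}
	\E\left[\left(\int_{(0,t)\times\R}\Phi_s(y)\,\xi(\d s\,\d y)\right)
	\left(\int_{(0,t)\times\R}\Psi_s(y)\,\xi(\d s\,\d y)\right)\right]
	= \int_0^t\d s\int_{-\infty}^\infty\d y\ \E[\Phi_s(y)\Psi_s(y)],
\end{equation}
which is just the polarization of the Walsh $L^2$-isometry (equivalently, the fact that $\langle M,N\rangle_t=\int_0^t\!\d s\int\d y\,\Phi_s(y)\Psi_s(y)$). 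Applying this with $\Phi_s(y)=\sigma_1(u_s(y))$ and $\Psi_s(y)=\sigma_2(v_s(y))$ gives exactly the claimed identity, provided both integrands are square-integrable over $[0,t]\times\R$ in the required sense — which is guaranteed by Theorem 2.4 of Chen and Dalang \cite{ChenDalang13Heat} as recalled in the proof of Lemma \ref{lem:sufficient}. Finally, positivity of the covariance is immediate: by \eqref{pos} and \eqref{sigma} we have $\sigma_1(u_s(y))\ge0$ and $\sigma_2(v_s(y))\ge0$ almost surely, so the integrand $\E[\sigma_1(u_s(y))\sigma_2(v_s(y))]$ is nonnegative pointwise, whence the integral is $\ge0$.

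The main obstacle, such as it is, is purely a matter of bookkeeping with the stochastic-integration hypotheses: one must make sure that the cross-covariation formula for Walsh integrals applies, i.e. that $\sigma_i(u_s(y))$ and $\sigma_i(v_s(y))$ lie in the right class of integrands on $[0,t]\times\R$ — for this the Lipschitz bound $|\sigma_i(z)|\le \lip(\sigma_i)|z|$ (from \eqref{sigma}) reduces everything to the finiteness of $\int_0^t\E\|u_s\|_{L^2(\R)}^2\,\d s$ and $\int_0^t\E\|v_s\|_{L^2(\R)}^2\,\d s$, which is part of the definition of a solution under the stated initial conditions. Once that is in place, the identity is just polarization of the Itô/Walsh isometry and there is nothing deeper to do.
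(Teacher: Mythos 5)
Your proof is correct, but it runs in the opposite order from the paper's. The paper works at the level of the two--point function: it polarizes the Walsh isometry \emph{before} integrating in space, obtaining from \eqref{mild} the identity
\begin{equation*}
	\E\left[ u_t(x) v_t(x') \right]
	= (p_t*u_0)(x)(p_t*v_0)(x')
	+ \int_0^t \d s\int_{-\infty}^\infty \d y\ p_{t-s}(y-x)\,p_{t-s}(y-x')\,
	\E\left[ \sigma_1(u_s(y))\sigma_2(v_s(y))\right],
\end{equation*}
and then integrates $[\d x\,\d x']$, using $\int p_{t-s}(y-x)\,\d x=1$, to get the covariance formula in one line. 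You instead integrate first --- invoking the stochastic Fubini theorem to write $\|u_t\|_{L^1(\R)}=\|u_0\|_{L^1(\R)}+M_t$ and $\|v_t\|_{L^1(\R)}=\|v_0\|_{L^1(\R)}+N_t$ as in Lemma \ref{lem:sufficient} --- and then compute $\E[M_tN_t]$ by the polarized isometry (equivalently $\<M,N\>_t$). Both routes rest on exactly the same fact, namely the covariation formula for Walsh integrals against the same noise, so the mathematical content is identical; your version has the small advantage of making the martingale structure of the total masses explicit (and of recycling the computation already done for Lemma \ref{lem:sufficient}), while the paper's version has the advantage that the intermediate identity for $\E[u_t(x)v_t(x')]$ is reused verbatim as the starting point of the proof of Theorem \ref{th:main}. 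Your side remarks are also well taken: the integrability needed to apply the isometry does reduce, via $|\sigma_i(z)|\le\lip(\sigma_i)|z|$ and $\sigma_i(0)=0$, to $\int_0^t\E\|u_s\|_{L^2(\R)}^2\,\d s<\infty$, which is part of the definition of a solution, and positivity of the covariance follows from \eqref{sigma} and \eqref{pos} exactly as you say.
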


\begin{proof}
	It follows from \eqref{mild} that
	\begin{equation}\label{E(uv)}\begin{split}
		&\E\left[ u_t(x) v_t(x') \right]
			= (p_t*u_0)(x) (p_t*v_0)(x') \\
		&\quad + \int_0^t \d s
			\int_{-\infty}^\infty \d y\ p_{t-s}(y-x)p_{t-s}(y-x')
			\E\left[ \sigma_1(u_s(y))\sigma_2(v_s(y))\right]\d y,
	\end{split}\end{equation}
	for all $t>0$ and $x,x'\in\R$. Integrate $[\d x\,\d x']$ to finish.
\end{proof}

Before we continue our analysis of the covariance of the previous
lemma, let us define the functions
\begin{equation}\label{r}
	r_\beta(x) := \frac{1}{2\sqrt{\beta\theta}}\exp\left(-|x|\sqrt{
	\beta/\theta}\right)\qquad[\beta>0,x\in\R],
\end{equation}
and corresponding linear operators
\begin{equation}\label{R}
	(\mathcal{R}_\beta f)(x) := (r_\beta*f)(x)\qquad[\beta>0,x\in\R].
\end{equation}
It is not hard to see that $\{\mathcal{R}_\beta\}_{\beta>0}$
is the resolvent of the heat semigroup for Brownian motion
run at twice the standard speed. The corresponding Dirichlet
forms are denoted by
\begin{equation}\label{E}
	\mathcal{E}_\beta(f\,,g) :=
	\<f\,, \mathcal{R}_\beta g\>_{L^2(\R)},
\end{equation}
and $\mathcal{E}_\beta(f\,,g)$ is called the \emph{mutual $\beta$-energy between $u_0$
and $v_0$}, as is customary.
The next theorem is the main result of this paper. Its content
can be summarized loosely as follows: If
$\mathcal{E}_\beta(u_0\,,v_0)\ll \exp(-\beta t)$ for some $t,\beta>0$, then
$\|u_s\|_{L^1(\R)}$ and $\|v_s\|_{L^1(\R)}$ are nearly uncorrelated
for all $s\in(0\,,t)$. 

\begin{theorem}\label{th:main}
	For all $t\ge0$ and
	$\beta>[\lip(\sigma_1)\cdot\lip(\sigma_2)]^2/(4\theta)$,
	\begin{equation}
		\Cov\left( \|u_t\|_{L^1(\R)}\,,\|v_t\|_{L^1(\R)}\right)
		\le \frac{2\sqrt{\beta\theta}\,
		\lip(\sigma_1)\lip(\sigma_2)}{2\sqrt{\beta\theta} -
		 \lip(\sigma_1)\lip(\sigma_2)}\cdot\e^{\beta t}\mathcal{E}_\beta(u_0\,,v_0).
	\end{equation}
\end{theorem}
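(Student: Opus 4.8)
\emph{Strategy.} The plan is to turn the covariance into a time integral via Lemma~\ref{lem:Cov}, collapse the spatial structure by integrating the second-moment identity \eqref{E(uv)} over space, and then take a Laplace transform in the time variable. The hypothesis $\beta>[\lip(\sigma_1)\lip(\sigma_2)]^2/(4\theta)$ is exactly the condition that makes the resulting linear inequality solvable, and the mutual $\beta$-energy enters because it is the Laplace transform in time of the ``free'' term.

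\emph{Step 1 (Reduction to a scalar Volterra inequality).} Set $L:=\lip(\sigma_1)\lip(\sigma_2)$ and, for $s\ge0$,
\be
	M_s:=\int_{-\infty}^\infty\E[\sigma_1(u_s(y))\sigma_2(v_s(y))]\,\d y,
	\qquad a_s:=\<p_s*u_0\,,p_s*v_0\>_{L^2(\R)},
\ee
so that Lemma~\ref{lem:Cov} reads $\Cov(\|u_t\|_{L^1(\R)}\,,\|v_t\|_{L^1(\R)})=\int_0^tM_s\,\d s$. Since $\sigma_i(0)=0$, each $\sigma_i$ is $\lip(\sigma_i)$-Lipschitz, and $u_s,v_s\ge0$ by \eqref{pos}, one has $0\le\sigma_1(u_s(y))\sigma_2(v_s(y))\le L\,u_s(y)v_s(y)$, hence $0\le M_s\le L\int_{-\infty}^\infty\E[u_s(y)v_s(y)]\,\d y$. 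I would then set $x=x'$ in \eqref{E(uv)} and integrate the resulting identity for $\E[u_s(x)v_s(x)]$ over $x\in\R$; using $\int_{-\infty}^\infty p_\tau(z-x)^2\,\d x=(4\pi\theta\tau)^{-1/2}$ (independent of $z$) and $a_s=\<u_0\,,p_{2s}*v_0\>_{L^2(\R)}$ (from $p_s*p_s=p_{2s}$ and self-adjointness of convolution by an even kernel), this yields
\be\label{eq:plan-volterra}
	M_s\le L\,a_s+\frac{L}{2\sqrt{\pi\theta}}\int_0^s\frac{M_r}{\sqrt{s-r}}\,\d r
	\qquad(s>0).
\ee

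\emph{Step 2 (Laplace transform).} Fix a finite $T>0$, multiply \eqref{eq:plan-volterra} by $\e^{-\beta s}$, and integrate over $s\in(0\,,T)$. Write $\widehat{M}_T(\beta):=\int_0^T\e^{-\beta s}M_s\,\d s$; this is finite because $\int_0^TM_s\,\d s=\Cov(\|u_T\|_{L^1(\R)}\,,\|v_T\|_{L^1(\R)})<\infty$ (by Lemma~\ref{lem:sufficient} and a standard first-moment $L^2(\R)$-estimate for \eqref{mild}). By Tonelli and the identity $\int_0^\infty\tau^{-1/2}\e^{-\beta\tau}\,\d\tau=\sqrt{\pi/\beta}$, the convolution term is at most $\frac{L}{2\sqrt{\beta\theta}}\widehat{M}_T(\beta)$ and the free term is at most $L\int_0^\infty\e^{-\beta s}a_s\,\d s$; and $\int_0^\infty\e^{-\beta s}a_s\,\d s=\<u_0\,,\mathcal{R}_\beta v_0\>_{L^2(\R)}=\mathcal{E}_\beta(u_0\,,v_0)$ because $r_\beta(x)=\int_0^\infty\e^{-\beta s}p_{2s}(x)\,\d s$ (a direct check against \eqref{r}) and Tonelli applies by nonnegativity of $u_0,v_0$. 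Thus
\be
	\widehat{M}_T(\beta)\le L\,\mathcal{E}_\beta(u_0\,,v_0)+\frac{L}{2\sqrt{\beta\theta}}\,\widehat{M}_T(\beta).
\ee

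\emph{Step 3 (Conclusion, and the main difficulty).} For $\beta>L^2/(4\theta)$ we have $L/(2\sqrt{\beta\theta})<1$, and since $\widehat{M}_T(\beta)<\infty$ the previous display rearranges to $\widehat{M}_T(\beta)\le\frac{2\sqrt{\beta\theta}\,L}{2\sqrt{\beta\theta}-L}\,\mathcal{E}_\beta(u_0\,,v_0)$ for every finite $T$. Taking $T=t$ and using $\int_0^tM_s\,\d s\le\e^{\beta t}\int_0^t\e^{-\beta s}M_s\,\d s=\e^{\beta t}\widehat{M}_t(\beta)$ gives the stated inequality (the case $t=0$ being trivial). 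I expect the real obstacle to be conceptual rather than computational: one must notice that integrating \eqref{E(uv)} over the spatial variable replaces the two-parameter heat-kernel convolution by the scalar weight $(4\pi\theta\tau)^{-1/2}$, reducing everything to a one-dimensional renewal-type inequality whose Laplace transform is governed precisely by the mutual $\beta$-energy and whose solvability threshold is $\beta=[\lip(\sigma_1)\lip(\sigma_2)]^2/(4\theta)$. The only point requiring genuine care is the a priori finiteness of $\widehat{M}_T(\beta)$, which is why I would run the argument on a finite horizon $(0\,,T)$ and set $T=t$ only at the end.
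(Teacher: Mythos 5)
Your proof is correct and follows the same basic route as the paper: bound the integrand in Lemma \ref{lem:Cov} by $\lip(\sigma_1)\lip(\sigma_2)\,\E[u_s(y)v_s(y)]$, integrate the second-moment identity \eqref{E(uv)} over space to get a scalar Volterra inequality with kernel $(4\pi\theta\tau)^{-1/2}$, and take a Laplace transform, with the mutual energy appearing as $\int_0^\infty \e^{-\beta s}\<u_0,p_{2s}*v_0\>_{L^2(\R)}\,\d s$ and the threshold $\beta>[\lip(\sigma_1)\lip(\sigma_2)]^2/(4\theta)$ coming from $\lip(\sigma_1)\lip(\sigma_2)\,r_\beta(0)<1$. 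The one genuine difference is how you secure the a priori finiteness needed to rearrange the inequality: the paper defines $Z(\beta)$ over the infinite time horizon, must invoke the Foondun--Khoshnevisan exponential moment bound to see that $Z(\beta)<\infty$ for $\beta$ large, and then needs an extra iteration of \eqref{recur} to push the conclusion down to all $\beta$ above the stated threshold. Your truncation at a finite horizon $T$ makes $\widehat{M}_T(\beta)\le\Cov(\|u_T\|_{L^1(\R)},\|v_T\|_{L^1(\R)})<\infty$ automatic from Lemmas \ref{lem:sufficient} and \ref{lem:Cov}, so the rearrangement is valid immediately for every $\beta$ above the threshold; this is a clean simplification that dispenses with both the external moment estimate and the iteration step. (A cosmetic difference: you close the Volterra inequality on $M_s=\int\E[\sigma_1(u_s(y))\sigma_2(v_s(y))]\,\d y$ rather than on $\E[\<u_s,v_s\>_{L^2(\R)}]$; both work since \eqref{E(uv)} is an identity whose forcing term already involves $M$.)
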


\begin{proof}
	Because of \eqref{E(uv)} and
	the positivity assertions \eqref{sigma} and \eqref{pos},
	\begin{align}
		\E\left[ u_t(x) v_t(x) \right]
			&\le(p_t*u_0)(x) (p_t*v_0)(x) \\\notag
		&\ + \lip(\sigma_1)\lip(\sigma_2)\int_0^t \d s
			\int_{-\infty}^\infty \d y\ \left[ p_{t-s}(y-x)\right]^2
			\E\left[ u_s(y) v_s(y)\right]\d y,
	\end{align}
	for every $t\ge0$ and $x\in\R$.
	We integrate both side $[\d x]$ and appeal to the Tonelli theorem to find that
	\begin{align}
		&\E\left[\< u_t\,,v_t\>_{L^2(\R)}\right] \\\notag
		&\le \left\< p_t*u_0\,, p_t*v_0
			\right\>_{L^2(\R)} + \lip(\sigma_1)\lip(\sigma_2)
			\int_0^t \|p_{t-s}\|_{L^2(\R)}^2\E\left[\<u_s\,,v_s \>_{L^2(\R)}\right]
			\d s,
	\end{align}
	Since $p_t*p_s=p_{t+s}$ and $\<p_t*f\,, g\>_{L^2(\R)}=\<f\,,p_t*g\>_{L^2(\R)}$,
	this means that
	\begin{align}\label{recur}
		&\E\left[\< u_t\,,v_t\>_{L^2(\R)}\right] \\\notag
		&\hskip.3in \le \left\< u_0\,, p_{2t}*v_0
			\right\>_{L^2(\R)} + \lip(\sigma_1)\lip(\sigma_2)
			\int_0^t p_{2(t-s)}(0)
			\E\left[ \<u_s\,,v_s\>_{L^2(\R)}\right]\d s.
	\end{align}

	Define, for all $\beta>0$,
	\begin{equation}
		Z(\beta) := \int_0^\infty\e^{-\beta t}
		\E\left[\<u_t\,,v_t\>_{L^2(\R)}\right]\d t.
	\end{equation}
	Of course, $Z(\beta)\ge 0$ because of
	\eqref{pos} and the Tonelli theorem.
	In addition, we can infer from \eqref{recur} and Lemma \ref{lem:Cov} that
	\begin{equation}\label{Cov:Z}\begin{split}
		\Cov\left( \|u_t\|_{L^1(\R)}\,, \|v_t\|_{L^1(\R)} \right)
			&\le \lip(\sigma_1) \lip(\sigma_2)\cdot\int_0^t
			\E\left[\< u_s\,,v_s\>_{L^2(\R)}\right]\d s\\
		&\le \e^{\beta t}\lip(\sigma_1) \lip(\sigma_2)\cdot Z(\beta),
	\end{split}\end{equation}
	valid for every $t,\beta>0$.

	It is possible to see that $Z(\beta)<\infty$ for $\beta$ large.
	Here is a crude proof: Apply the Cauchy--Schwarz inequality twice
	in order to see that
	\begin{equation}
		Z(\beta)
		\le \int_0^\infty \e^{-\beta t}\sqrt{\E\left(\|u_t\|_{L^2(\R)}^2\right)
		\cdot\E\left(\|v_t\|_{L^2(\R)}^2\right)}\d t.
	\end{equation}
	As was shown in Foondun and Khoshnevisan \cite{FK},
	for every $\lambda>[\lip(\sigma)]^4/(4\theta)$ there exists a finite constant
	$C(\lambda)$ such that
	\begin{equation*}
			\E\left(\|u_t\|_{L^2(\R)}^2\right) \le C(\lambda) \e^{\lambda t}
			\qquad\text{for all $t\ge0$}.
	\end{equation*}
	Therefore, 
	\begin{equation}
		Z(\beta) \le \sqrt{C(\lambda_1)\cdot C(\lambda_2)}
		\int_0^\infty\exp\left( -\left[\beta -\frac{\lambda_1+\lambda_2}{2}\right]
		t\right)\d t<\infty,
	\end{equation}
	for all positive $\lambda_1$ and $\lambda_2$
	such that
	$\lambda_j>[\lip(\sigma_j)]^4/(4\theta)$ for $j=1,2$.

	Consequently, we can deduce from \eqref{recur}
	that as long as $Z(\beta)<\infty$---and this happens
	for all $\beta$ sufficiently large---we have
	\begin{equation}\begin{split}
		Z(\beta) &\le \int_0^\infty \e^{-\beta t}
			\left\< u_0\,, p_{2t}*v_0\right\>_{L^2(\R)} \,\d t+
			\lip(\sigma_1)\lip(\sigma_2)\, r_\beta(0)\, Z(\beta)\\
		&= \mathcal{E}_\beta(u_0\,,v_0) +
			\lip(\sigma_1)\lip(\sigma_2)\, r_\beta(0)\,Z(\beta),
	\end{split}\end{equation}
	since $r_\beta(x) := \int_0^\infty \e^{-\beta t} p_{2t}(x)\,\d t$,
	as can be seen directly from \eqref{r}.

	Thus, whenever $Z(\beta)<\infty$, we have
	\begin{equation}\label{Z(beta):UB}
		Z(\beta) \le  \mathcal{E}_\beta(u_0\,,v_0)
		\left[ 1 -
		 \frac{\lip(\sigma_1)\lip(\sigma_2)}{2\sqrt{\beta\theta}}\right]^{-1}.
	\end{equation}
	An elementary iteration of \eqref{recur} shows that the preceding holds, in fact,
	whenever it makes sense [that is, whenever the right-hand side
	is strictly positive]. Equivalently, that
	\eqref{Z(beta):UB} is valid for all $\beta>(4\theta)^{-1}
	[\lip(\sigma_1)\cdot\lip(\sigma_2)]^2$. The result follows from \eqref{Cov:Z}.
\end{proof}

\section{Proof of the corollaries}
Armed with Theorem \ref{th:main} we conclude the paper by
establishing its corollaries.

\subsection{Proof of Corollary \ref{co:PAM}}
	If $u_0\in L^1(\R)$, then $u_t$ is a.s.\ integrable for all $t>0$;
	see for example Lemma \ref{lem:sufficient}. We work on the more interesting
	converse next.

	Suppose $u_0\in L^\infty(\R)$ is not in $L^1(\R)$; specifically,
	we are assuming that
	\begin{equation}\label{u0:infty}
		\| u_0\|_{L^1(\R)} = \int_{-\infty}^\infty u_0(x)\,\d x=\infty.
	\end{equation}

	Define
	\begin{equation}
		u_{0,N}(x) := u_0(x) \bm{1}_{[-N,N]}(x)\qquad\text{$[x\in\R, N=1,2,\ldots]$},
	\end{equation}
	and let $u_{t,N}(x)$ denote the solution to \eqref{SHE},
	starting from initial function $u_{0,N}$;
	that is,
	\begin{equation}\label{uN}
		u_{t,N}(x) := (p_t* u_{0,N})(x) + \int_{(0,t)\times\R} p_{t-s}(y-x)
		\sigma\left( u_{s,N}(y)\right) \xi(\d s\,\d y).
	\end{equation}

	Since $u_0(x)\ge u_{0,N}(x)$ for all $x\in\R$ and $N\ge 1$,
	an appeal to the comparison theorem of SPDEs \cite{Mueller1,Mueller2} shows that
	$u_t(x)\ge u_{t,N}(x)$ for all $t\ge0$, $x\in\R$, and $N\ge 1$,
	all off a single $\P$-null set. In particular,
	\begin{equation}\label{u:uN}
		\|u_t\|_{L^1(\R)} \ge \| u_{t,N}\|_{L^1(\R)}
		\qquad\text{for all $t\ge0$ and $N\ge 1$},
	\end{equation}
	$\P$-almost surely. It remains to prove that
	\begin{equation}\label{eq:uu}
		\limsup_{N\to\infty}\| u_{t,N}\|_{L^1(\R)}=\infty\quad\text{a.s.}
	\end{equation}

	Because of \eqref{pos}, $u_{t,N}(x)\ge0$ for
	all $t\ge0$, $N\ge 1$, and $x\in\R$ a.s. Therefore,
	$\|u_{t,N}\|_{L^1(\R)}=\int_{-\infty}^\infty u_{t,N}(x)\,\d x$.
	In particular, \eqref{u0:infty},
	\eqref{uN}, and the monotone convergence theorem together
	yield
	\begin{equation}\label{EE}
		\lim_{N\to\infty} \E\left( \|u_{t,N}\|_{L^1(\R)} \right) =
		\lim_{N\to\infty} \|u_{0,N}\|_{L^1(\R)}=\infty.
	\end{equation}
	By Chebyshev's inequality, we obtain \eqref{eq:uu}---%
	and hence the result---once we prove that
	for every $t\ge0$ there exists a finite constant $K(t)$ such that
	\begin{equation}\label{VV}
		\Var\left(\|u_{t,N}\|_{L^1(\R)} \right) \le K(t)\cdot
		\E\left( \|u_{t,N}\|_{L^1(\R)} \right) \qquad\text{for all $N\ge1$}.
	\end{equation}

	According to Theorem \ref{th:main}, there exist
	finite constants $A>0$ and $\beta>0$ such that for all $t\ge0$ and $N\ge 1$,
	\begin{equation}\begin{split}
		\Var\left(\|u_{t,N}\|_{L^1(\R)} \right) &\le A\e^{\beta t}\cdot
			\int_{-\infty}^\infty u_{0,N}(x) (\mathcal{R}_\beta u_{0,N})(x)\,\d x\\
		&\le A\e^{\beta t}\cdot\sup_{x\in\R}(\mathcal{R}_\beta u_0)(x)\cdot
			\E\left( \|u_{t,N}\|_{L^1(\R)}\right),
	\end{split}\end{equation}
	because $u_{0,N}\le u_0$ whence
	$\mathcal{R}_\beta u_{0,N}\le\mathcal{R}_\beta u_0$;  
	eq.\ \eqref{EE} justifies the last line.  Thus
	follows \eqref{VV}, since
	\begin{equation}\label{Ru_0}
		\sup_{y\in\R}(\mathcal{R}_\beta u_0)(y) \le \|u_0\|_{L^\infty(\R)}\cdot
		\int_{-\infty}^\infty r_\beta(x)\,\d x
		= \beta^{-1} \|u_0\|_{L^\infty(\R)}.
	\end{equation}
	The integral was evaluated directly from \eqref{r}.\qed

\subsection{Remarks on initial measures}\label{rem:measure}
	As we mentioned in the Introduction, the results of
	this paper extend to the case that $u_0$ is a measure without
	a great deal of extra effort. Let us point out how this 
	is done for Corollary \ref{co:PAM}.

	\begin{corollary}\label{co:PAM:bis}
		Let $u$ solve \eqref{SHE} for all $x\in\R$, starting from a nonrandom
		Borel measure $u_0$ on $\R$ that satisfies
		\begin{equation}\label{init:meas}
			\int_{-\infty}^\infty \e^{-cx^2} u_0(\d x)<\infty
			\qquad\text{for all } c>0.
		\end{equation}
		Then $u_0$ is finite iff the system \eqref{SHE} is finite.
	\end{corollary}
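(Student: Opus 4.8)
The plan is to deduce Corollary~\ref{co:PAM:bis} from Theorem~\ref{th:main} via the truncation scheme already used for Corollary~\ref{co:PAM}, after upgrading the relevant inputs to measure-valued initial data. First I would record the following facts, all of which follow from Chen and Dalang~\cite{ChenDalang13Heat} and Chen and Kim~\cite{ChenKim14Comp}: under \eqref{init:meas} the SPDE \eqref{SHE} has a unique random-field solution obeying \eqref{mild} with $(p_t*u_0)(x):=\int_{-\infty}^\infty p_t(x-y)\,u_0(\d y)$, this solution is $L^2(\R)$-valued for each $t>0$, the comparison principle ``$u_0\le v_0$ (as measures) $\Rightarrow u_t\le v_t$ a.s.'' continues to hold, and the proofs of Lemma~\ref{lem:Cov} and Theorem~\ref{th:main} carry over verbatim provided one reads $\mathcal{E}_\beta(u_0,v_0)$ as the mutual $\beta$-energy $\int\!\!\int r_\beta(x-y)\,u_0(\d x)\,v_0(\d y)$ of the two measures; the only point at which \eqref{init:meas} is actually needed is to guarantee $Z(\beta)<\infty$ for large $\beta$, and that follows from the second-moment bounds of \cite{ChenDalang13Heat}.

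Granting this, taking expectations in \eqref{mild} gives $\E[u_t(x)]=(p_t*u_0)(x)$, so Tonelli's theorem yields $\E(\|u_t\|_{L^1(\R)})=\int_{-\infty}^\infty (p_t*u_0)(x)\,\d x = u_0(\R)$ for all $t\ge0$. Hence if $u_0(\R)<\infty$ then $\|u_t\|_{L^1(\R)}<\infty$ a.s.\ for every $t\ge0$ and the system is finite --- this is the easy half. For the converse, suppose $u_0(\R)=\infty$. Condition \eqref{init:meas} forces $u_0([-N,N])<\infty$, so I would set $u_{0,N}:=u_0|_{[-N,N]}$, let $u_{\cdot,N}$ solve \eqref{SHE} with nonlinearity $\sigma$ and initial measure $u_{0,N}$, and apply the comparison principle along $u_{0,1}\le u_{0,2}\le\cdots\le u_0$ to get $\|u_{t,N}\|_{L^1(\R)}\uparrow L_t\le\|u_t\|_{L^1(\R)}$, $\P$-a.s. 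By the identity just derived, $\E(\|u_{t,N}\|_{L^1(\R)})=u_0([-N,N])\to\infty$, while Theorem~\ref{th:main} (in its measure form, with $u=v$) together with $\mathcal{E}_\beta(u_{0,N},u_{0,N})\le r_\beta(0)\,u_0([-N,N])^2$ gives $\E(\|u_{t,N}\|_{L^1(\R)}^2)\le(1+C)\,u_0([-N,N])^2$ for a finite constant $C=C(\beta,t)$ independent of $N$. The Paley--Zygmund inequality then produces $\P\big(\|u_{t,N}\|_{L^1(\R)}>\tfrac12 u_0([-N,N])\big)\ge[4(1+C)]^{-1}$ for all $N$; letting $N\to\infty$ and using $\|u_t\|_{L^1(\R)}\ge L_t$ gives $\P(\|u_t\|_{L^1(\R)}=\infty)\ge[4(1+C)]^{-1}>0$, so the system is not finite. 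This proves the stated equivalence.

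The step I expect to be the real obstacle is the last one: passing from the mutual-energy bound to a variance bound. In Corollary~\ref{co:PAM} the decisive estimate $\sup_x(\mathcal{R}_\beta u_0)(x)\le\beta^{-1}\|u_0\|_{L^\infty(\R)}<\infty$ made $\Var(\|u_{t,N}\|_{L^1(\R)})$ \emph{linear} in $u_0([-N,N])$, so Chebyshev's inequality upgraded ``not finite'' all the way to ``$\|u_t\|_{L^1(\R)}=\infty$ a.s.''\ --- i.e.\ the system is \emph{infinite}. For a general measure satisfying only \eqref{init:meas} the potential $\mathcal{R}_\beta u_0$ may be unbounded (mass is allowed to accumulate at a sub-Gaussian rate), so one is stuck with the quadratic bound above, and Paley--Zygmund delivers only $\P(\|u_t\|_{L^1(\R)}=\infty)>0$. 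That is enough for the ``finite iff finite'' assertion of Corollary~\ref{co:PAM:bis}, but promoting it to the full dichotomy would require an additional zero--one (or spatial decoupling) argument. In the special case $\sup_x(\mathcal{R}_\beta u_0)(x)<\infty$ --- which holds, for instance, whenever $u_0$ has a bounded density --- the linear bound persists via $\mathcal{E}_\beta(u_{0,N},u_{0,N})\le\sup_x(\mathcal{R}_\beta u_0)(x)\cdot u_0([-N,N])$, and the proof of Corollary~\ref{co:PAM} goes through word for word to show that the system is infinite.
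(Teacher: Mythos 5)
Your proposal is correct for the statement as literally written, but it diverges from the paper at exactly the point you flag as ``the real obstacle,'' and the paper's resolution is worth knowing. You truncate $u_0$ to $[-N,N]$ directly, accept the quadratic bound $\mathcal{E}_\beta(u_{0,N},u_{0,N})\le r_\beta(0)\,u_0([-N,N])^2$, and use Paley--Zygmund to get $\P(\|u_t\|_{L^1(\R)}=\infty)>0$; since ``finite'' means a.s.\ finite for all $t$, this does negate finiteness and so proves the stated equivalence. The paper instead inserts one extra step before truncating: it replaces $u_0$ by the renormalized measure $v_0$ determined by
\begin{equation*}
	v_0\left(A\cap[n\,,n+1)\right):=\frac{u_0\left(A\cap[n\,,n+1)\right)}{u_0([n\,,n+1))\vee 1},
\end{equation*}
which is dominated by $u_0$ (so $\|v_t\|_{L^1(\R)}\le\|u_t\|_{L^1(\R)}$ by the Chen--Kim comparison theorem), is still an infinite measure whenever $u_0(\R)=\infty$, and --- because each $v_0([n\,,n+1))\le1$ --- has a \emph{uniformly bounded} $\beta$-potential, $(\mathcal{R}_\beta v_0)(x)\le(4\beta\theta)^{-1/2}+4\beta^{-1}$. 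This restores precisely the linear variance bound $\Var(\|v_{t,N}\|_{L^1(\R)})\le K(t)\,\E(\|v_{t,N}\|_{L^1(\R)})$ from the proof of Corollary \ref{co:PAM}, so Chebyshev yields $\|v_t\|_{L^1(\R)}=\infty$ a.s., hence $\|u_t\|_{L^1(\R)}=\infty$ a.s. In other words, the ``additional decoupling argument'' you predicted would be needed to promote positive probability to almost-sure infiniteness is not a zero--one law but this domination trick; it is what lets the paper preserve the finite/infinite dichotomy advertised in the Introduction, which your Paley--Zygmund route gives up. Your approach buys a shorter argument for the bare ``iff''; the paper's buys the stronger a.s.\ conclusion at the cost of one auxiliary measure.
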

	
	\begin{proof}[Sketch of Proof]
	The proof is basically an adaptation of the proof of 
	Corollary \ref{co:PAM} to the case that $u_0$ is a measure. We hash out
	the details only where they are not standard.
	
	Theorem 2.4 of Chen and Dalang
	\cite{ChenDalang13Heat} tells us that, under Condition
	\eqref{init:meas}, \eqref{SHE} has a solution $u$
	that is uniquely defined by the following moment condition:
	$\sup_{x\in\R}\E(|u_t(x)|^k)<\infty$ for all $t>0$
	and $k\ge 1$. 
	Standard arguments then imply that the process $\{u_t\}_{t\ge0}$
	is Markov. We now condition at time $t$ and appeal to the 
	preceding moment condition,
	together with the well-known fact that  if we start
	\eqref{SHE} from  an independent,  $L^k(\Omega)$-bounded
	initial function, then the solution is H\"older continuous.
	It follows that $u$ is H\"older continuous a.s.\
	at all points $(t\,,x)$ where $t>0$ and $x\in\R$.
	This comment will take care of all of the meaurability issues that might crop
	up in the remainder of the proof.
	
	If $u_0$ has finite total mass, then
	Theorem 2.4 of \cite{ChenDalang13Heat} 
	and Lemma \ref{lem:sufficient} together imply that 
	$\|u_t\|_{L^1(\R)}<\infty$ a.s.\ for all $t>0$ in exactly
	the same way that the corresponding result held for
	$u_0\in L^1(\R)\cap L^\infty(\R)$.
	Let us, therefore, consider the more interesting case that
	$u_0(\R)=\infty$; of course, \eqref{init:meas} is still in place. We
	propose to prove that $\|u_t\|_{L^1(\R)}=\infty$ a.s.\ for all $t>0$.
	
	For every Borel set $A\subseteq\R$ and all integers $n$ define
	\begin{equation}
	 	v_0(A\cap[n\,,n+1)):= \
		\frac{u_0\left(A\cap [n\,,n+1)\right)}{u_0([n\,,n+1))\vee 1}.
	\end{equation}
	This construction uniquely describes  a Borel measure $v_0$ on $\R$ that is dominated
	by $u_0$ in the sense that $v_0(A)\le u_0(A)$ for all linear Borel sets $A$.
	In particular, $v_0$  satisfies \eqref{init:meas}.
	We observe also that  $v_0$ is  an infinite measure.
	
	Let $v_t(x)$ denote the
	solution to \eqref{SHE}, started at measure $v_0$. The comparison theorem 
	of Chen and Kim \cite{ChenKim14Comp} shows that $v_t(x)\le u_t(x)$
	for all $x\in\R$ and $t>0$ a.s., and hence $\|v_t\|_{L^1(\R)}\le \|u_t\|_{L^1(\R)}$.
	In order to complete our proof 
	we propose to show that $\|v_t\|_{L^1(\R)}=\infty$ a.s.\ for all $t>0$.
	
	Since $v_0([n\,,n+1))\le 1$ and 
	$\sum_{n=-\infty}^\infty r_\beta(n)\le r_\beta(0)+ 2\int_0^\infty r_\beta(x)\,\d x
	=(4\beta\theta)^{-1/2}+2/\beta$,
	the  convexity of $x\mapsto|x|$ shows that
	for all $\beta>0$,
	\begin{equation}
		(\mathcal{R}_\beta v_0)(x) \le 
		\sum_{n=-\infty}^\infty \max_{y\in [n,n+1]}r_\beta(y-x)
		\le (4\beta\theta)^{-1/2}+ 4\beta^{-1},
	\end{equation}
	uniformly for all $x\in\R$.
	In other words, $v_0$ is an infinite measure that 
	has a bounded $\beta$-potential for every $\beta>0$; compare with
	\eqref{Ru_0}. From here on, we can mimic
	our  derivation of Corollary \ref{co:PAM}---starting with
	$v_{0,N}:=$ the restriction of $v_0$ to $[-N\,,N]$---in order to deduce that
	$\|v_t\|_{L^1(\R)}=\infty$ a.s.\ for all $t>0$.  We omit the remaining details.
	\end{proof}
	
\subsection{Proof of Corollary \ref{co:1}}
	Let $\Delta$ denote the Hausdorff  distance between
	the supports of $u_0$ and $v_0$. If $\Delta=0$ then the
	result holds vacuously. Therefore, we will consider only the
	case that $\Delta>0$. In that case,
	\begin{equation}\begin{split}
		\mathcal{E}_\beta(u_0\,,v_0) &= \frac{1}{2\sqrt{\beta\theta}}
			\int_{-\infty}^\infty\d x
			\int_{-\infty}^\infty \d y\
			\e^{-|x-y|\sqrt{\beta/\theta}} u_0(x)v_0(y)\\
		&\le \frac{1}{2\sqrt{\beta\theta}} \|u_0\|_{L^1(\R)}\|v_0\|_{L^1(\R)}
			\cdot\e^{ -\Delta\sqrt{\beta/\theta}}.
	\end{split}\end{equation}
	Consequently, Theorem \ref{th:main} assures us that simultaneously
	for all $t\ge0$ and
	$\beta>[\lip(\sigma_1)\cdot\lip(\sigma_2)]^2/(4\theta)$,
	\begin{equation}
		\Cov\left( \|u_t\|_{L^1(\R)}\,,\|v_t\|_{L^1(\R)}\right)
		\le \frac{K \exp\left(\beta t -\Delta\sqrt{\beta/\theta}\right)}{2\sqrt{\beta\theta} -
		 \lip(\sigma_1)\lip(\sigma_2)}.
	\end{equation}
	where $K:=\lip(\sigma_1)\lip(\sigma_2)\cdot\|u_0\|_{L^1(\R)}\|v_0\|_{L^1(\R)}$.
	Now choose $\beta:= \delta(\Delta/t)^2$ for a suitable choice of
	$\delta>0$ in order to finish.\qed

\subsection{Proof of Corollary \ref{co:2}}
	By Parseval's identity,
	\begin{equation}\begin{split}
		\mathcal{E}_\beta(u_0\,,v_0) &= \frac{1}{2\pi} \int_{-\infty}^\infty
			\widehat{u}_0(z)\overline{\widehat{v}_0(z)} \widehat{r}_\beta(z)\,\d z
			=\frac{1}{2\pi} \int_{-\infty}^\infty
			\frac{\widehat{u}_0(z)\overline{\widehat{v}_0(z)}}{\beta+\theta z^2}\, \d z\\
		&\le\frac{1}{2\pi\beta}\int_{-\infty}^\infty
			\left|\widehat{u}_0(z)\widehat{v}_0(z)\right|\d z\\
		&\le \frac{1}{2\pi\beta}\|u_0\|_{L^1(\R)}\|v_0\|_{L^1(\R)}\cdot
			\text{\rm meas}\left(
			\mathcal{S}[\widehat{u}_0]
			 \cap \mathcal{S}[\widehat{v}_0]\right),
	\end{split}\end{equation}
	since the Fourier transform
	of every function $\varphi\in L^1(\R)$ is bounded uniformly in modulus
	by $\|\varphi\|_{L^1(\R)}$. The corollary follows
	easily from this bound. The remainder of the proof is similar to that of
	Corollary \ref{co:1}, and hence omitted.
	\qed

\begin{small}
\bigskip

\noindent\textbf{Le Chen}, \textbf{Davar Khoshnevisan}, and \textbf{Kunwoo Kim}\\
\noindent Department of Mathematics, University of Utah,
		Salt Lake City, UT 84112-0090\\
\noindent\emph{Emails} \& \emph{URLs}:\\
        \indent\texttt{chen@math.utah.edu}\hfill
        \url{http://www.math.utah.edu/~chen/}
	\indent\texttt{davar@math.utah.edu}\hfill
		\url{http://www.math.utah.edu/~davar/}
	\indent\texttt{kkim@math.utah.edu}\hfill
		\url{http://www.math.utah.edu/~kkim/}
\end{small}

\end{document}